\newtheorem{theorem}{Theorem}
\newtheorem{proposition}{Proposition}
\newtheorem{lemma}{Lemma}
\newcommand{\diag}{\hbox{\rm \,diag}}
\newcommand*\xbar[1]{%
  \hbox{%
    \vbox{%
      \hrule height 0.5pt % The actual bar
      \kern0.5ex%         % Distance between bar and symbol
      \hbox{%
        \kern-0.1em%      % Shortening on the left side
        \ensuremath{#1}%
        \kern-0.1em%      % Shortening on the right side
      }%
    }%
  }%
} 
\newcommand{\one}{{\bf 1}}
\title{A defect-correction algorithm for quadratic matrix equations, with applications to quasi-Toeplitz matrices\footnote{Work supported by GNCS of INdAM.}}
\author{Dario A. Bini\footnote{Dipartimento di Matematica, Universit\`a di Pisa ({email: \tt dario.bini@unipi.it})}, and Beatrice Meini\footnote{Dipartimento di Matematica, Universit\`a di Pisa ({email: \tt beatrice.meini@unipi.it})}}
\begin{document}
\maketitle
\begin{abstract}
A defect correction formula for quadratic matrix equations of the kind $A_1X^2+A_0X+A_{-1}=0$ is presented. This formula, expressed by means of an invariant subspace of a suitable pencil, allows us to introduce a modification of the Structure-preserving Doubling Algorithm (SDA), that enables refining an initial approximation 
to the sought solution. This modification provides substantial advantages, in terms of convergence acceleration, in the solution of equations coming from stochastic models, by choosing a stochastic matrix as the initial approximation. An application to solving random walks in the quarter plane is shown, where the coefficients $A_{-1},A_0,A_1$ are quasi-Toeplitz matrices of infinite size.
\end{abstract}

\section{Introduction}
Let $n\in\mathbb N\cup\{\infty\}$ and let $A_k$, $k=-1,0,1$, be $n\times n$  matrices. The quadratic matrix equation 
\begin{equation}\label{eq:mateq}
A_1X^2+A_0X+A_{-1}=0
\end{equation}
is related to the quadratic eigenvalue problem \cite{guo}, \cite{HK} and is encountered in many applications from different areas of applied mathematics \cite{lr:book}, \cite{tiss};  in the last decade, algorithms for its solution have received much interest in the literature. In particular, in the analysis of quasi birth-death (QBD) processes \cite{lr:book} and more specifically in  bidimensional random walks, the matrix coefficients $A_{-1},A_0,A_1$ have the form
$A_{-1}=-B_{-1}$, $A_0=I-B_0$, $A_1=-B_1$, where $B_i$ are tridiagonal matrices having nonnegative entries and $B=B_{-1}+B_0+B_1$ is row stochastic, that is, $B\one=\one$, where $\one$ is the vector with all ones.
In this framework, the solution of interest is the minimal nonnegative solution $X=G$ that always exists \cite{neuts}. In certain stochastic processes characterized by an infinite number of states \cite{ozawa}, the size of the matrix coefficients $A_i$ as well as of the solution $G$ is infinite \cite{bmmr:sisc}. In a more general framework, it is assumed that there exists a solution $G$ having minimal spectral radius and the goal is its numerical approximation.

Several algorithms for computing $G$ have been introduced in the literature. Besides the fixed point iterations having a linear convergence, see \cite[Chapter 6]{blm:book}, quadratically convergent algorithms like Newton's iteration \cite{latouche}, cyclic reduction (CR) and logarithmic reduction (LR) \cite[Chapter 7]{blm:book} or Structure-preserving Doubling Algorithms (SDA) \cite[Chapter 5]{bim:book}, \cite{li:book}, have been considered and analysed in the literature.

Fixed point iterations, defined by the recurrence $X_{k+1}=F(X_k)$ for a given matrix function $F(X)$ and for a given initial approximation $X_0$,  have a relatively low cost per step, a typically linear convergence, and provide the possibility to refine a given approximation $X_0$.
Newton's iteration still allows the choice of a starting approximation to be refined, has the advantage to provide a quadratic convergence but its computational cost per step is generally much higher than the cost of the standard fixed-point iterations, since a Sylvester equation must be solved at each step.
On the other hand, CR, LR,  and SDA, have quadratic convergence, a relatively low cost per step, so that they are much more convenient than Newton iteration. But unfortunately, unlike fixed-point iterations,  they do not allow to choose an assigned initial approximation $X_0$. This is an annoying drawback of this class of methods.

In this paper, by following the ideas of \cite{defectBG} and \cite{defectM}, starting from an approximation $\widetilde G$ of the sought solution $G$, we derive an equation for the defect $H=G-\widetilde G$ and express $H$ in terms of the invariant subspace of a suitable pencil. By relying on this formulation,  we provide a modification of SDA that allows us to refine an initial approximation $\widetilde G$. We prove some convergence results and, in the case of problems stemming from stochastic processes, we show that, under a suitable choice of $\widetilde G$, the convergence speed of SDA can be further improved. Finally, we show an application to the analysis of random walks in the quarter plane, where the matrix coefficients $A_i$, $i=-1,0,1$, as well as the sought solution $G$, are infinite matrices endowed of the quasi-Toeplitz structure (QT matrices). In this framework, there are situations where CR, LR and SDA fail to converge if applied in the customary way, whereas, under a suitable choice of the starting approximation $\widetilde G$,  our modified version of SDA  converges in a few iteration steps.

By means of numerical experiments we show that the test problems discussed in \cite{bmmr:sisc}, concerning infinite dimensional problems,  can be efficiently solved by our modified version of SDA with a CPU time that is much inferior to the time needed by the previously available algorithms \cite{bmm20}.

The paper is organized as follows. In Section \ref{sec:prel}, we summarise some basic properties of SDA and of invariant subspaces of pencils, moreover  we recall the SDA iteration together with its convergence properties. In Section \ref{sec:lin}, we first reformulate the quadratic matrix equation in terms  of invariant subspaces of a linear pencil, then we introduce the defect equation and express it in terms of an invariant subspace, finally we introduce and analyse the modification of SDA to refine a given approximation of the solution.
In Section~\ref{sec:stoc}, we consider the case where the equation originates from a stochastic model and show that choosing $\widetilde G$ stochastic yields a substantial acceleration of the convergence.
In the same section, we introduce a further modification that, besides providing a further acceleration of the convergence speed, allows us to control the convergence by means of a reliable and cheap condition to halt the iterations.
Section \ref{sec:qt} shows an application of our technique to solving the quadratic matrix equation in the infinite dimensional case where coefficients are quasi-Toeplitz matrices. In fact, in this case, our method allows us to apply a quadratically convergent iteration where the current algorithms available in the literature, based on fixed point iterations \cite{bmmr:sisc}, \cite{bmm20}, only provide a linear convergence and require a much higher CPU time.
Finally, Section \ref{sec:exp} reports the results of some numerical experiments that demonstrate the effectiveness of our approach. We draw conclusions in Section~\ref{sec:con}.

\section{Preliminaries on SDA}\label{sec:prel}
We recall some properties of linear pencil at the basis of the design and analysis of SDA, we refer the reader to \cite[Chapter 5]{bim:book} and \cite{li:book} for more details. 

Let $M$, $N$ be $2n\times 2n$ matrices and consider the linear pencil $M-\lambda N$.
If 
\begin{equation}\label{ssf-i}
M=\begin{bmatrix}
E&0\\-P&I
\end{bmatrix},\quad
N=\begin{bmatrix}
I&-Q\\0&F
\end{bmatrix},
\end{equation}
where $E,F,P,Q$ and $I$ are $n\times n$ matrices, then the pencil is said to be in standard structured form of kind I (SSF-I).

Consider the problem of computing $n\times n$ matrices $X$ and $W$ such that
\begin{equation}\label{ssf-i-0}
M\begin{bmatrix}
I\\X
\end{bmatrix}=
N\begin{bmatrix}
I\\X
\end{bmatrix}W.
\end{equation}
We say that the columns of $\left[\begin{smallmatrix}I\\X\end{smallmatrix}\right]$ span a graph deflating subspace of the pencil $M-\lambda N$ associated with the eigenvalues of $W$.
If the pencil  is in SFF-I and satisfies \eqref{ssf-i-0},  then setting
\begin{equation}\label{ssf-i-k}
M_k=\begin{bmatrix}
E_k&0\\-P_k&I
\end{bmatrix},\quad
N_k=\begin{bmatrix}
I&-Q_k\\0&F_k
\end{bmatrix},
\end{equation}
with $E_0=E,~F_0=F,~P_0=P,~Q_0=Q$, and
\begin{equation}\label{eq:sda}
\begin{array}{ll}
 E_{k+1}=E_k(I-Q_kP_k)^{-1}E_k,& \quad P_{k+1}=P_k+F_k(I-P_kQ_k)^{-1}P_kE_k\\
 F_{k+1}=F_k(I-P_kQ_k)^{-1}F_k,&\quad Q_{k+1}=E_k(I-Q_kP_k)^{-1}Q_kE_k,
\end{array}
\end{equation}
for $k=0,1,\ldots$,
 yields the equation
\[
M_k\begin{bmatrix}
I\\X
\end{bmatrix}=
N_k\begin{bmatrix}
I\\X
\end{bmatrix}W^{2^k}.
\]
Here, we assume that the matrices $I-P_kQ_k$ and $I-Q_kP_k$ are invertible, for $k=0,1,\ldots$.

SDA consists in computing the sequences defined in \eqref{eq:sda} which, under suitable convergence properties, provide an approximation to the matrix $X$ for a sufficiently large values of $k$.

Denote $\rho(A)$ the spectral radius of the matrix $A$, i.e., the maximum modulus of the eigenvalues of $A$. We recall the following convergence results of SDA \cite[Theorems 5.3, 5.4]{bim:book}.

\begin{theorem}\label{th:sda}
If the scheme \eqref{eq:sda} can be carried out with no breakdown, and if $\rho(W)<1$,  $\|F_k\|<\gamma$ for some operator norm $\|\cdot\|$ and positive constant $\gamma$, then $\lim_k \|P_k-X\|=0$, and $\limsup_k \|X-P_k\|^{1/2^k}\le\rho(W)$.
\end{theorem}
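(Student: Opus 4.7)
The plan is to read off the two block equations obtained from the structured identity
\[
M_k\begin{bmatrix}I\\X\end{bmatrix}=N_k\begin{bmatrix}I\\X\end{bmatrix}W^{2^k},
\]
which is already established inductively above the theorem, and then to isolate an explicit expression for the error $X-P_k$ that can be bounded using the hypotheses $\rho(W)<1$ and $\|F_k\|<\gamma$.

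First I would expand both sides using the SSF-I form \eqref{ssf-i-k}. The top block yields $E_k=(I-Q_kX)W^{2^k}$, which is not directly needed for the error estimate, while the bottom block yields exactly the identity
\[
X-P_k=F_k\,X\,W^{2^k}.
\]
This algebraic identity is the key to the theorem: it reduces the analysis of the error $P_k-X$ to the asymptotic behaviour of $W^{2^k}$, with $F_k$ only contributing a uniformly bounded factor.

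Next I would take norms to obtain
\[
\|X-P_k\|\le\|F_k\|\,\|X\|\,\|W^{2^k}\|\le\gamma\,\|X\|\,\|W^{2^k}\|.
\]
Since $\rho(W)<1$, Gelfand's formula gives $\|W^{2^k}\|^{1/2^k}\to\rho(W)<1$, so in particular $\|W^{2^k}\|\to 0$. This immediately yields $\|P_k-X\|\to 0$. Taking $2^k$-th roots of the same inequality and using $(\gamma\|X\|)^{1/2^k}\to 1$ gives
\[
\limsup_k\|X-P_k\|^{1/2^k}\le\rho(W),
\]
which is the second claim.

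I do not expect a serious obstacle: all the real work was already carried out in the derivation of \eqref{ssf-i-k}–\eqref{eq:sda} preceding the theorem, so the proof is essentially a two-line extraction from the block identity followed by a standard application of Gelfand's spectral-radius formula. The only mild subtlety is that one should note that the no-breakdown hypothesis is exactly what guarantees the validity of the identity $X-P_k=F_kXW^{2^k}$ for every $k$, so the bound is genuinely uniform in $k$.
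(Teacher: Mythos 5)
Your proof is correct. Note that the paper does not supply its own proof of this theorem, citing \cite[Theorems 5.3, 5.4]{bim:book} instead; your argument (extracting the block identity $X-P_k=F_kXW^{2^k}$ from the iterated pencil relation, then bounding by $\gamma\|X\|\,\|W^{2^k}\|$ and applying Gelfand's formula) is the standard one underlying that reference.
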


The uniform boundedness of $\|F_k\|$ is guaranteed under the conditions expressed by the following 
\begin{theorem}\label{th:conv}
Let $X,Y,W,V$ be $n\times n$ matrices such that
\[
M\begin{bmatrix}
I\\X
\end{bmatrix}=
N\begin{bmatrix}
I\\X
\end{bmatrix}W,\quad
M\begin{bmatrix}
Y\\I
\end{bmatrix}V=
N\begin{bmatrix}
Y\\I
\end{bmatrix},
\]
and  $\sigma:=\rho(W)\le 1$, $\tau:=\rho(V)\le 1$, $\sigma\tau<1$. If the scheme \eqref{eq:sda} can be carried out with no breakdown, 
then 
\begin{equation}\label{eq:conv}
    \begin{array}{ll}
         E_k=(I-Q_kX)W^{2^k},&X-P_k=F_kXW^{2^k},  \\
         F_k=(I-P_kY)V^{2^k},&Y-Q_k=E_kYV^{2^k}. 
    \end{array}
\end{equation}
Moreover, we have $\limsup_k\|X-P_k\|^{1/2^k}\le\sigma\tau$, $\limsup_k\|Y-Q_k\|^{1/2^k}\le\sigma\tau$, $\limsup_k\|E_k\|^{1/2^k}\le\sigma$, $\limsup_k\|F_k\|^{1/2^k}\le\tau$.
\end{theorem}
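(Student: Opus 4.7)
The plan is two-stage: first establish the four identities in~\eqref{eq:conv} by induction on $k$, and then extract the spectral-radius asymptotics from them through a short bootstrap combined with Gelfand's formula.

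The base case $k=0$ is immediate: expanding the two pencil equations in the hypothesis in block form (with $M,N$ as in~\eqref{ssf-i}) gives $E=(I-QX)W$, $X-P=FXW$, $F=(I-PY)V$, $Y-Q=EYV$, which are exactly \eqref{eq:conv} at $k=0$. For the inductive step, the conceptual content is that \eqref{eq:sda} is the unique squaring rule preserving the SSF-I structure, so that
\[
M_{k+1}\begin{bmatrix}I\\X\end{bmatrix}=N_{k+1}\begin{bmatrix}I\\X\end{bmatrix}W^{2^{k+1}},\qquad
M_{k+1}\begin{bmatrix}Y\\I\end{bmatrix}V^{2^{k+1}}=N_{k+1}\begin{bmatrix}Y\\I\end{bmatrix}.
\]
To verify this at the entry level, I would rewrite $I-Q_kP_k=(I-Q_kX)+Q_k(X-P_k)$ and $I-P_kQ_k=(I-P_kY)+P_k(Y-Q_k)$, and then use the level-$k$ identities to recognize $(I-Q_kX)W^{2^k}=E_k$ and $(I-P_kY)V^{2^k}=F_k$ inside the recurrence. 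The resulting Sherman--Morrison-style cancellations reduce \eqref{eq:sda} to the claimed level-$(k+1)$ identities.

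For the convergence bounds, I would fix any operator norm and set $e_k=\|E_k\|$, $f_k=\|F_k\|$, $w_k=\|W^{2^k}\|$, $v_k=\|V^{2^k}\|$. By Gelfand's formula, $w_k^{1/2^k}\to\sigma$ and $v_k^{1/2^k}\to\tau$, and since $\sigma\tau<1$ this forces $v_kw_k\to 0$. Taking norms in the already-proved identities $E_k=(I-Q_kX)W^{2^k}$ and $Q_k=Y-E_kYV^{2^k}$ gives
\[
e_k\le\bigl(1+\|X\|\|Q_k\|\bigr)w_k,\qquad \|Q_k\|\le\|Y\|+\|Y\|v_ke_k,
\]
which on substitution yields $e_k\bigl(1-\|X\|\|Y\|v_kw_k\bigr)\le\bigl(1+\|X\|\|Y\|\bigr)w_k$. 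For $k$ large the bracket exceeds $1/2$, so $e_k\le Cw_k$ and therefore $\limsup e_k^{1/2^k}\le\sigma$; the symmetric argument gives $\limsup f_k^{1/2^k}\le\tau$. Substituting back into $X-P_k=F_kXW^{2^k}$ and $Y-Q_k=E_kYV^{2^k}$ delivers $\limsup\|X-P_k\|^{1/2^k}\le\sigma\tau$ and $\limsup\|Y-Q_k\|^{1/2^k}\le\sigma\tau$.

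The main obstacle I expect is the inductive step: the four identities are entangled, and the key is to organize the algebra so that each application of $(I-Q_kP_k)^{-1}$ or $(I-P_kQ_k)^{-1}$ interacts correctly with the low-rank corrections hidden inside $E_k$ and $F_k$. Once the decompositions of $I-Q_kP_k$ and $I-P_kQ_k$ above are used, the computation becomes mechanical, but spotting that particular rewriting is really the heart of the argument; the convergence stage, by contrast, is a routine Gelfand-plus-bootstrap estimate.
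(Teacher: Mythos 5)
The paper itself does not prove Theorem~\ref{th:conv}: it is explicitly recalled from the cited monograph (\cite[Theorems 5.3, 5.4]{bim:book}), so there is no in-paper proof to compare against. That said, your blind argument is a valid, self-contained proof, and it deserves a substantive comparison with what the paper actually supplies in its setup.

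Your plan is essentially a two-stage reconstruction: prove the doubled pencil relations $M_k\left[\begin{smallmatrix}I\\X\end{smallmatrix}\right]=N_k\left[\begin{smallmatrix}I\\X\end{smallmatrix}\right]W^{2^k}$ and its dual by induction, read off the four identities in~\eqref{eq:conv} by block expansion, and then close a Gelfand-plus-bootstrap estimate. The second and third stages are fully correct as you present them: the block expansion of an SSF-I pencil is exactly the one-line computation you give for $k=0$, and the bootstrap $e_k(1-\|X\|\|Y\|v_kw_k)\le(1+\|X\|\|Y\|)w_k$ together with $v_kw_k\to 0$ (which follows from $\sigma\tau<1$) delivers $\limsup_k e_k^{1/2^k}\le\sigma$, and symmetrically $\limsup_k f_k^{1/2^k}\le\tau$; substituting back into the other two identities gives $\sigma\tau$ rates for $X-P_k$ and $Y-Q_k$. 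This is precisely the right mechanism and you handle the subtlety (that $\|Q_k\|$ and $\|P_k\|$ are not bounded a priori but become so once $v_kw_k<\|X\|^{-1}\|Y\|^{-1}$) correctly.

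The one place where your proposal is more effortful than needed is the inductive step. The paper already records, right after introducing the recurrence~\eqref{eq:sda}, that the scheme produces $M_k\left[\begin{smallmatrix}I\\X\end{smallmatrix}\right]=N_k\left[\begin{smallmatrix}I\\X\end{smallmatrix}\right]W^{2^k}$ (and analogously for the dual). In the monograph this doubling is obtained not by an entry-level induction on the four scalar identities, but by factoring $M_{k+1}=M_k^*M_k$, $N_{k+1}=N_k^*N_k$ with $M_k^*N_k=N_k^*M_k$, which makes the squaring of $W$ and $V$ immediate and conceptually separates the algebraic doubling from the spectral estimates. Your approach, by contrast, entangles the doubling with the four identities and proves them jointly. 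It works — a direct computation confirms, for instance, that $(I-P_kQ_k)XW^{2^k}-P_kE_k=F_kXW^{2^{k+1}}$, from which $X-P_{k+1}=F_{k+1}XW^{2^{k+1}}$ follows after multiplying on the left by $F_k(I-P_kQ_k)^{-1}$ — but the ``$M^*$, $N^*$'' route is the standard one and is cleaner to present, because it does not require you to first guess the right rewriting of $I-Q_kP_k$ and $I-P_kQ_k$. If you are writing this up for yourself, I would recommend citing or reproducing the $M^*N=N^*M$ argument for the doubling and keeping your bootstrap, which is the part of the proof that the paper genuinely relies on downstream.
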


We refer to the equation involving $Y$ and $V$ in Theorem \ref{th:conv} as to the {\em dual equation}.
Finally, we recall the following result \cite[Theorem 5.5]{bim:book} that gives conditions under which, given a general pencil $M-\lambda N$, there exists a pencil $\widehat M-\lambda\widehat N$ in SFF-I equivalent to $M-\lambda N$. We say that two pencils $M-\lambda N$ and $\widetilde M-\lambda \widetilde N$ are equivalent if there exist  nonsingular matrices $S_1$ and $S_2$ such that $\widetilde M=S_1MS_2$, $\widetilde N=S_1NS_2$.

\begin{theorem}\label{th:equiv}
Given a $2n\times 2n$ pencil $M-\lambda N$, partition the matrices $M$ and $N$ as $M=[M_1,M_2]$, $N=[N_1,N_2]$, where the blocks $M_i,N_i$ are $2n\times n$ matrices. If $S=[N_1,M_2]$ is invertible, then 
\[
\begin{bmatrix}
E&-Q\\-P&F
\end{bmatrix}=S^{-1}[M_1,N_2]
\]
defines, through \eqref{ssf-i}, a pencil in SSF-I that is equivalent to $M-\lambda N$. Moreover the invertibility of $S$ is a necessary condition.
\end{theorem}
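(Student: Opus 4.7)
The plan is to exhibit the equivalence explicitly by choosing $S_1 = S^{-1}$ and $S_2 = I_{2n}$, and then reading off the SSF-I structure of $S^{-1}M$ and $S^{-1}N$ directly from the partition $S = [N_1, M_2]$ together with the defining formula for $E, P, Q, F$.

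The central step is a block-column bookkeeping. From $S^{-1}S = I_{2n}$ and $S = [N_1, M_2]$ I would read off
$S^{-1} N_1 = \begin{bmatrix} I \\ 0 \end{bmatrix}$ and $S^{-1} M_2 = \begin{bmatrix} 0 \\ I \end{bmatrix}$.
The defining relation $\begin{bmatrix} E & -Q \\ -P & F \end{bmatrix} = S^{-1}[M_1, N_2]$ then supplies the remaining two block columns, namely $S^{-1} M_1 = \begin{bmatrix} E \\ -P \end{bmatrix}$ and $S^{-1} N_2 = \begin{bmatrix} -Q \\ F \end{bmatrix}$. Assembling the four block columns yields
\[
S^{-1} M = \begin{bmatrix} E & 0 \\ -P & I \end{bmatrix}, \qquad S^{-1} N = \begin{bmatrix} I & -Q \\ 0 & F \end{bmatrix},
\]
which is exactly the template \eqref{ssf-i}; hence $(S_1, S_2) = (S^{-1}, I_{2n})$ realises the required equivalence.

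For the necessity statement I would argue at two levels. At the formal level, the formula defining $E, P, Q, F$ invokes $S^{-1}$, so its well-definedness alone forces $S$ to be invertible. More intrinsically, suppose an $S_1$ exists (with $S_2 = I_{2n}$) such that $S_1 M$ and $S_1 N$ are in SSF-I. Then the first block column of $S_1 N$ must equal $\begin{bmatrix} I \\ 0 \end{bmatrix}$ and the second block column of $S_1 M$ must equal $\begin{bmatrix} 0 \\ I \end{bmatrix}$; together these identities say $S_1 [N_1, M_2] = I_{2n}$, and therefore $S = [N_1, M_2]$ is invertible with inverse $S_1$.

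I do not anticipate any genuine obstacle: the argument is entirely a matter of tracking block-column identifications in the $2 \times 2$ partition. The only minor point to watch is the sign convention built into the definition $S^{-1}[M_1, N_2] = \begin{bmatrix} E & -Q \\ -P & F \end{bmatrix}$, which must be matched exactly against the signs in the SSF-I template \eqref{ssf-i} so that the off-diagonal blocks come out as $-Q$ and $-P$ (and not $Q$ and $P$) after assembly.
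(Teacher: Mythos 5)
Your proof is correct, and the block-column bookkeeping you describe is the intended argument; the paper itself does not prove this result but cites it as Theorem 5.5 of \cite{bim:book}, so there is no internal proof to compare against. One remark on the necessity claim: you restrict, correctly, to $S_2 = I_{2n}$, that is, to left equivalence only. That restriction is implicitly forced in the SDA setting, where right multiplication by $S_2$ would destroy the graph structure of the deflating subspace $\bigl[\begin{smallmatrix}I\\X\end{smallmatrix}\bigr]$, and the theorem's own construction is a pure left multiplication by $S^{-1}$. But under the paper's general definition of pencil equivalence, which permits an arbitrary nonsingular $S_2$, the invertibility of $S$ is \emph{not} necessary: already for $n=1$, taking $M=I_2$ and $N=\bigl[\begin{smallmatrix}0&1\\1&0\end{smallmatrix}\bigr]$ gives a singular $S=[N_1,M_2]=\bigl[\begin{smallmatrix}0&0\\1&1\end{smallmatrix}\bigr]$, yet $S_1=\bigl[\begin{smallmatrix}1&-1\\0&1\end{smallmatrix}\bigr]$ and $S_2=\bigl[\begin{smallmatrix}0&1\\1&1\end{smallmatrix}\bigr]$ bring this pencil to SSF-I. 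It would strengthen your write-up to state the restriction $S_2=I_{2n}$ explicitly, and to say why it is the right restriction, rather than leaving it implicit.
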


\section{Solving the quadratic matrix equation}\label{sec:lin}
Here and hereafter, we assume that $A_0$ is invertible and that there exist matrices $V$ and $G$ such that $\rho(G)\le 1$, $\rho(V)\le 1$, $\rho(G)\rho(V)<1$ and
\begin{equation}\label{eq:ass}
A_1G^2+A_0G+A_{-1}=0,\quad A_{-1}V^2+A_0V+A_1=0.
\end{equation}
The goal is to compute $G$, given an approximation $\widetilde G$.

\subsection{Linearization of the quadratic matrix equation}
The quadratic matrix equations \eqref{eq:ass} can be equivalently rewritten in terms of invariant subspaces as
\begin{equation}\label{eq:comp}
    M \begin{bmatrix}
    I\\G
    \end{bmatrix}=N\begin{bmatrix}
    I\\G
    \end{bmatrix}G,~~~
 M \begin{bmatrix}
    V \\I 
    \end{bmatrix} V =N\begin{bmatrix}
    V \\I
    \end{bmatrix},
 \end{equation}
where
\begin{equation}\label{eq:compmn}
M=
    \begin{bmatrix}
    0&I\\-A_{-1}&-A_0
    \end{bmatrix},\quad N=
    \begin{bmatrix}
    I&0\\ 0&A_1
    \end{bmatrix}.
\end{equation}

In view of Theorem \ref{th:equiv}, we may reduce the pencil $M-\lambda N$, with $M$ and $N$ defined in \eqref{eq:compmn}, into a pencil $\widehat M-\lambda \widehat N$ in SSF-I, where
\begin{equation}\label{eq:mhat}
\widehat M=
    \begin{bmatrix}
    -A_0^{-1}A_{-1}&0\\A_0^{-1}A_{-1}&I
    \end{bmatrix},\quad \widehat N=
    \begin{bmatrix}
    I&A_0^{-1}A_1\\ 0&-A_0^{-1}A_1
    \end{bmatrix}.
\end{equation}
We may easily verify that
\begin{equation}\label{eq:is1}
\widehat M \begin{bmatrix}
    I\\G
    \end{bmatrix}=\widehat N\begin{bmatrix}
    I\\G
    \end{bmatrix}G,~~~
\widehat M\begin{bmatrix}
V\\I
\end{bmatrix}V=\widehat N\begin{bmatrix}V\\I\end{bmatrix}.
   \end{equation}
This way, we may apply SDA in order to solve equation \eqref{eq:mateq}, that is, apply \eqref{eq:sda} with $E_0=P_0=-A_0^{-1}A_{-1}$ and
$F_0=Q_0=-A_0^{-1}A_{1}$.
Therefore, since $\rho(G)\rho(V)<1$, by applying Theorem \ref{th:conv} we find that $P_k$ converges to $G$, $Q_k$ converges to $V$. Moreover, $\limsup_k \| P_k-G\|^{1/2^k}\le\rho(G)\rho(V)$, $\limsup_k \| Q_k-V\|^{1/2^k}\le \rho(G)\rho(V)$.

Observe also that, unlike fixed point iterations,  the SDA in the form \eqref{eq:conv} does not allow to refine a given initial approximation to $G$ and to $V$. In the next section we overcome this drawback.

\subsection{Defect equation for the quadratic matrix equation}\label{sec:sdaref}
Here we follow the lines of \cite{defectBG}, \cite{defectM} where the technique of defect-correction is introduced for refining an approximation to the solution of a discrete-time algebraic Riccati equation.

Assume that we are given an approximation $\widetilde G$ to $G$, and write
\begin{equation}\label{eq:tg}
G=\widetilde G+H.
\end{equation}
Replacing \eqref{eq:tg} in the first equation of \eqref{eq:ass} yields
\begin{equation}\label{eq:mateq1}
\begin{aligned}
&A_1H^2+A_1H\widetilde G+(A_0+A_1\widetilde G)H+R(\widetilde G)=0,\\
&R(\widetilde G)=A_1\widetilde G^2+A_0\widetilde G+A_{-1},
\end{aligned}
\end{equation}
where now the unknown is $H$.

By following the lines described at the beginning of Section \ref{sec:lin}, we can rewrite \eqref{eq:mateq1} in terms of an invariant subspace formulation as follows
\begin{equation}\label{eq:is2}
\widetilde M\begin{bmatrix}
I\\ H
\end{bmatrix} = \widetilde N\begin{bmatrix}
I\\ H
\end{bmatrix}G,\qquad G=H+\widetilde G,
\end{equation}
where
\begin{equation}\label{eq:mtilde}
\widetilde M=
\begin{bmatrix}
\widetilde G&I\\-R(\widetilde G)&-(A_0+A_1\widetilde G)
\end{bmatrix},~~~ \widetilde N=N.
\end{equation}

We may easily verify that
\[
\widetilde M=\begin{bmatrix}
I & 0 \\ -A_1\widetilde G & I
\end{bmatrix}
M
\begin{bmatrix}
I & 0\\
\widetilde G & I
\end{bmatrix},~~~
\widetilde N=\begin{bmatrix}
I & 0 \\ -A_1\widetilde G & I
\end{bmatrix}
N
\begin{bmatrix}
I & 0\\
\widetilde G & I
\end{bmatrix}.
\]
This fact leads to the following

\begin{lemma}
The pencils  $M-\lambda N$ and $\widetilde M-\lambda \widetilde N$, with $M$, $N$ and $\widetilde M$, $\widetilde N$ defined in \eqref{eq:compmn} and \eqref{eq:mtilde}, respectively, are equivalent.
\end{lemma}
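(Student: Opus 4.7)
The lemma is essentially a restatement of the two matrix identities displayed immediately before it: the paper has already written down the factorizations
\[
\widetilde M = S_1 M S_2, \qquad \widetilde N = S_1 N S_2,
\]
with
\[
S_1 = \begin{bmatrix} I & 0 \\ -A_1\widetilde G & I \end{bmatrix}, \qquad S_2 = \begin{bmatrix} I & 0 \\ \widetilde G & I \end{bmatrix}.
\]
My plan therefore reduces to two tasks: first, show that $S_1$ and $S_2$ are nonsingular, and second, verify the two displayed factorizations by direct block multiplication. This suffices because the definition of equivalence recalled in Theorem \ref{th:equiv} requires exactly such $S_1, S_2$.

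For invertibility, I would simply observe that both $S_1$ and $S_2$ are block lower triangular with identity blocks on the diagonal, so they are unit lower triangular and hence invertible; explicit inverses are obtained by negating the off-diagonal blocks, namely $S_1^{-1} = \left[\begin{smallmatrix} I & 0 \\ A_1\widetilde G & I \end{smallmatrix}\right]$ and $S_2^{-1} = \left[\begin{smallmatrix} I & 0 \\ -\widetilde G & I \end{smallmatrix}\right]$. This requires no assumption on $\widetilde G$ beyond the fact that it is a well-defined $n \times n$ matrix.

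For the identities, I would compute $M S_2$ and then $S_1(M S_2)$ block by block, using the definition of $M$ from \eqref{eq:compmn}. This produces the $(1,1)$-block $\widetilde G$, the $(1,2)$-block $I$, the $(2,1)$-block $-A_1\widetilde G^2 - A_0\widetilde G - A_{-1} = -R(\widetilde G)$, and the $(2,2)$-block $-(A_0+A_1\widetilde G)$, which by \eqref{eq:mtilde} is exactly $\widetilde M$. The same procedure for $S_1 N S_2$, using the block diagonal structure of $N$, yields $N$ itself back (the $A_1\widetilde G$ terms cancel), matching $\widetilde N = N$ from \eqref{eq:mtilde}.

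There is no real obstacle here: the statement is a direct consequence of the pre-displayed identities together with the elementary observation that unit lower triangular block matrices are invertible. The only thing to take care of is to perform the block products in the correct order and keep track of signs when forming $R(\widetilde G)$.
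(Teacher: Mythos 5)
Your proof is correct and follows the same route as the paper: the paper displays precisely the factorizations $\widetilde M=S_1MS_2$, $\widetilde N=S_1NS_2$ with unit block lower triangular $S_1,S_2$ and lets the lemma follow immediately from the definition of pencil equivalence. Your added remarks — that $S_1,S_2$ are invertible with explicit inverses, and the direct block multiplication verifying the two identities — are exactly the routine checks the paper leaves to the reader with the phrase ``We may easily verify.''
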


We may easily verify that
\begin{equation}\label{eq:is3}
\widetilde M\begin{bmatrix}
Y \\ I
\end{bmatrix} Z = \widetilde N\begin{bmatrix}
Y \\ I
\end{bmatrix},\quad Y=V(I-\widetilde G V)^{-1},~~Z=(I-\widetilde G V)V(I-\widetilde G V)^{-1}.
\end{equation}
By applying Theorem \ref{th:equiv}, we can transform the pencil $\widetilde M-\lambda \widetilde N$ into the pencil $\Check M-\lambda\Check N$ in SSF-I, where
\begin{equation}\label{eq:shifted}
    \Check M=\begin{bmatrix}
        -SA_{-1}&0\\SR(\widetilde G)&I
    \end{bmatrix},\quad
    \Check N=\begin{bmatrix}
        I&SA_1\\ 0&-SA_1
    \end{bmatrix},\quad S=(A_0+A_1\widetilde G)^{-1}.
\end{equation}
By construction, we have
\begin{equation}\label{eq:is4}
\Check{M}\begin{bmatrix}
    I\\ H
    \end{bmatrix}=\Check N\begin{bmatrix}
    I\\ H
    \end{bmatrix}G,~~
    \Check M \begin{bmatrix}
Y\\ I
\end{bmatrix}Z=\Check N\begin{bmatrix}
Y\\I\end{bmatrix}.
\end{equation}

This way, the increment $H$ can be viewed in terms of a graph invariant subspace so that it can be computed, say, by means of SDA. This is the subject of the next section.

\subsection{Defect correction algorithm based on SDA}\label{sec:dcsda}
In this section, we provide a variant of the SDA which allows us to refine the initial approximation $\widetilde G$, by computing $H$.

The SDA iteration applied to \eqref{eq:is4} for computing the solution $H$ of equation \eqref{eq:mateq1} consists in setting
\begin{equation}\label{eq:sdashifted}
    \begin{array}{ll}
    P_0=-(A_0+A_1\widetilde G)^{-1}R(\widetilde G),& E_0=\widetilde G+P_0, \\
    F_0=-(A_0+A_1\widetilde G)^{-1}A_1, & Q_0=-F_0,\\
    \end{array}
\end{equation}
and in applying equations \eqref{eq:sda}.

Since $V$ and $Z$ have the same eigenvalues, then $\rho(G)\rho(Z)<1$, therefore, in view of Theorem \ref{th:conv}, we may conclude that SDA applied to  \eqref{eq:is4}  is convergent. Moreover, equations \eqref{eq:conv} turn into
\begin{equation}\label{eq:conv1}
    \begin{array}{ll}
         E_k=(I-Q_kH)G^{2^k},&H-P_k=F_kH G^{2^k},  \\
         F_k=(I-P_kY)Z^{2^k},&Y-Q_k=E_kYZ^{2^k}.
    \end{array}
\end{equation}
Since $P_k$ converges to $H$, at each step $k$ we get an approximation to the solution $G$ in the form $G_k=\widetilde G+P_k$.

At a first glance, equations \eqref{eq:conv1} seem to provide no substantial advantage in the acceleration to the convergence with respect to the analogous equations \eqref{eq:conv}, applied to the original matrix equation \eqref{eq:mateq}, with $W=G$ . In fact, in both cases, the convergence speed is determined by the factor $\sigma\tau$ for $\sigma=\rho(G)$ and $\tau=\rho(V)$. The only difference seems to be that in the upper bound to the norm of the error, that is $\|H-P_k\|\le \|F_k\|\,\|H\|\,\|G^{2^k}\|$, the factor  $\|H\|$ is smaller the closer is the initial approximation $\tilde G$ to the solution $G$.  But a more accurate analysis shows that the acceleration may be substantial as shown in the next section.

\section{The stochastic case}\label{sec:stoc}
Consider the case where $A_i=-B_1$, for $i=-1,1$, and $A_0=I-B_0$, where $B_i\ge 0$, $i=-1,0,1$, and $(B_{-1}+B_0+B_1)\one=\one$, where $\one=(1,1,\ldots,1)^T$.
Under this assumption, there exist  unique minimal nonnegative solutions $G$ and $V$ to \eqref{eq:ass}, respectively.

The matrices $B_i$, $i=-1,0,1$, define the homogeneous part of the transition matrix of a Quasi-Birth-and-Death process \cite{blm:book}. If such Markov chain is positive recurrent, then $G\one =\one$, so that $\rho(G)=1$ and $\rho(V)<1$. If the Markov chain is transient, then $V\one =\one $ and $\rho(G)<1$, while if the Markov chain is null recurrent then $G\one =V\one=\one$, so that $\rho(G)\rho(V)=1$.

Throughout this section we assume recurrence that is, $G\one=\one$. We also assume, without loss of generality, that $\lambda=1$ is the only eigenvalue of modulus 1 of $G$ \cite{blm:book}. The case $V\one=\one$ can be treated by exchanging the roles of $A_{-1}$ and $A_1$.

\subsection{Convergence acceleration}\label{sec:ca}
Observe that, if $\widetilde G$ is a 
stochastic matrix, then $H\one=G\one-\widetilde G \one=0$.
Therefore,  in view of \eqref{eq:conv1}, we have $(H-P_k)\one=F_k H \one=0$.
This implies that the matrices $P_k$ are stochastic for any $k$ and that there is no error in the approximation $P_k$ to  $H$ along the direction given by $\one$.
Moreover, if $u$ is any other eigenvector of $G$ corresponding to an eigenvalue $\lambda$ different from 1, we have 
$F_kH G^{2^k}u=\lambda^{2^k}F_kH u$. This implies that, if $G$ has $n$ linearly independent eigenvectors corresponding to the eigenvalues $\lambda_i$, $i=1,\ldots,n$ such that  $1=\lambda_1>|\lambda_2|\ge\cdots\ge |\lambda_n|$, then there exists a constant $\gamma$ such that for any vector $w$ one has $\|H G^{2^k}w\|\le \gamma|\lambda_2|^{2^k} \|w\|$. That is $\|H-P_k\|\le\tilde \gamma (\tau\tilde\sigma)^{2^k}$, where $\tilde \gamma$ is a constant and $\tilde\sigma=|\lambda_2|$ is the second largest eigenvalue of $G$ in modulus. This implies that $\limsup_k \|H-P_k\|^{1/2^k}\le \tau\tilde\sigma<\tau\sigma$. The same conclusion can be obtained in the case where $G$ has nontrivial Jordan blocks. 

This actually provides a strong acceleration especially in the cases where $\tau$ is close to 1, i.e., the stochastic process is close to be null-recurrent, and still guarantees superlinear convergence if the process is null recurrent, i.e., $\rho(V)=\rho(G)=1$.

Another observation concerning \eqref{eq:conv1} is that the expression for $Y-Q_k$ can be rewritten as 
\[
Y-Q_k=E_kV^{2^k+1}(I-\widetilde GV)^{-1}.
\]

It is worth recalling that a similar acceleration has been obtained in \cite[Section 2.6]{bim:book} by modifying the original equation \eqref{eq:mateq} into a new equation $\widetilde A_1X^2+\widetilde A_0 X+\widetilde A_{-1}=0$ whose solution differs from $G$ by a stochastic rank-1 correction. This manipulation is performed in such a way to shift the eigenvalue 1 of the pencil $M-\lambda N$ to zero.
However, the approach that we have introduced seems to be more general than the one shown in \cite[Section 2.6]{bim:book} since unlike the latter technique, it allows to choose any  initial approximation as $\widetilde G$, not necessarily a stochastic rank-1 matrix.

\subsection{A further improvement}\label{sec:fi}
Write the stochastic approximation $\widetilde G$ as $\widetilde G=\one u^T +S$, where $u$ is any vector such that $u^T \one=1$, and $S \one=0$.
This way, the matrix $H=G-\widetilde G$ is such that $H\one =0$ and $H\widetilde G=HS$. This property implies that equation \eqref{eq:mateq1} can be simplified into
\begin{equation}\label{eq:simple}
    A_1 H^2 + A_1 HS +(A_0+A_1\widetilde G)H+R(\widetilde G)=0.
\end{equation}
Such equation can be reformulated in terms of invariant subspaces as
\begin{equation}\label{eq:imp}
\xbar{M}     \begin{bmatrix} I \\ H
    \end{bmatrix}=\xbar{N}  \begin{bmatrix} I \\ H
    \end{bmatrix}(S+H)
\end{equation}
where $\xbar{M}$ and $\xbar{N}$ are defined by
\begin{equation}\label{eq:barm}
 \xbar{M}=   \begin{bmatrix}
    S & I \\
    -R(\widetilde G) & -(A_0+A_1\widetilde G)
    \end{bmatrix},\quad
    \xbar{N}=
    \begin{bmatrix}
    I & 0 \\
    0 & A_1
    \end{bmatrix}.
\end{equation}
Observe that $S+H=G-\one u^T$, so that the eigenvalues of $S+H$ coincide with the eigenvalues of $G$, except for $\lambda=1$, which is replaced by 0, in view of the Brauer Theorem \cite{bra}.

The pencil $\xbar{M}-\lambda\xbar{N}$ in \eqref{eq:barm} can be reduced in SSF-I by applying Theorem \ref{th:equiv}, this way we get
\begin{equation}\label{eq:impssf}
\begin{aligned}
&    \begin{bmatrix}
    S-KR(\widetilde G)&0\\ KR(\widetilde G)&I
    \end{bmatrix}
    \begin{bmatrix}I \\ H
    \end{bmatrix}=
    \begin{bmatrix}
        I&KA_1\\ 0&-KA_1
    \end{bmatrix}
        \begin{bmatrix}I \\ H
    \end{bmatrix}(S+H),\\
&    K=(A_0+A_1\widetilde G)^{-1}.
\end{aligned}
\end{equation}
Therefore, the SDA iteration \eqref{eq:sda} can be applied with the starting values 
\[
E_0=S-KR(\widetilde G), ~~F_0=Q=-KA_1,~~P_0=-KR(\widetilde G).
\]

The following result is fundamental for the convergence of SDA.

\begin{proposition}\label{prop}
Let $\widetilde M$ and $\widetilde N$ be the matrices defined in \eqref{eq:is2} and $\xbar{M}$, $\xbar{N}$ the matrices defined in \eqref{eq:barm}. Set $\widetilde \phi(\lambda)=\widetilde M -\lambda \widetilde N$,
$\xbar{\phi}(\lambda)=\xbar{M} -\lambda \xbar{N}$. Then  
\[
\xbar{\phi}(\lambda)=\widetilde \phi(\lambda)\left(I-\lambda^{-1}C\right)^{-1},\quad C=\begin{bmatrix}
    \one \\ 0 \end{bmatrix} \begin{bmatrix}
    u^T & 0^T
\end{bmatrix},
\]
and the eigenvalues of $\xbar\psi(\lambda)$ are the eigenvalues of  $\widetilde \psi(\lambda)$, except for the eigenvalue 1 which is replaced by 0.
Moreover, if the solution $V$ of \eqref{eq:ass} is diagonalizable, i.e., $V=S D S^{-1} $, $D =\diag(\mu_1,\ldots,\mu_n)$,   then 
\[
\xbar{M} \begin{bmatrix}
    \widehat Y \\ I
\end{bmatrix} Z = \xbar{N} \begin{bmatrix}
    \widehat Y \\ I
\end{bmatrix}
\]
where $\widehat Y=Y-D \one u^T Y$.
\end{proposition}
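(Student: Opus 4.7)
The plan is to exploit the observation that $\xbar M$ differs from $\widetilde M$ only by the rank-one matrix $C$ from the statement, while $\xbar N = \widetilde N$. Indeed, using $\widetilde G = \one u^T + S$ in \eqref{eq:mtilde} and comparing with \eqref{eq:barm} gives $\widetilde M - \xbar M = \begin{bmatrix}\one u^T & 0 \\ 0 & 0\end{bmatrix} = C$ and $\widetilde N - \xbar N = 0$, so $\xbar\phi(\lambda) = \widetilde\phi(\lambda) - C$. From this starting point each part of the proposition reduces to tracking how this rank-one correction interacts with the known deflating subspaces of $\widetilde\phi(\lambda)$.

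For the factorization identity, I would first evaluate $\widetilde\phi(\lambda)\begin{bmatrix}\one \\ 0\end{bmatrix} = \begin{bmatrix}\widetilde G\one - \lambda\one \\ -R(\widetilde G)\one\end{bmatrix}$. The recurrence hypotheses of Section~\ref{sec:stoc} give $\widetilde G\one = \one$ and, since $A_{-1}+A_0+A_1 = I-(B_{-1}+B_0+B_1)$ annihilates $\one$, also $R(\widetilde G)\one = (A_1+A_0+A_{-1})\one = 0$. Hence $\widetilde\phi(\lambda)C = (1-\lambda)C$. The idempotency $C^2 = C$, a direct consequence of $u^T\one = 1$, collapses the Neumann series to $(I - \lambda^{-1}C)^{-1} = I + (\lambda-1)^{-1}C$, and a single multiplication yields $\widetilde\phi(\lambda)(I-\lambda^{-1}C)^{-1} = \widetilde\phi(\lambda) - C = \xbar\phi(\lambda)$, as claimed.

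The eigenvalue statement follows by taking determinants. The matrix-determinant lemma gives $\det(I - \lambda^{-1}C) = 1 - \lambda^{-1} = (\lambda-1)/\lambda$, so $\det\xbar\phi(\lambda) = \det\widetilde\phi(\lambda) \cdot \lambda/(\lambda-1)$. The factor $1/(\lambda-1)$ exactly cancels the root of $\det\widetilde\phi$ at $\lambda=1$ (whose existence is already certified by the null vector $\begin{bmatrix}\one \\ 0\end{bmatrix}$ computed above), while the extra $\lambda$ introduces one new root at $0$; all other roots are preserved.

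For the dual identity, I would set $\widehat Y = Y + \Delta$, substitute into the target equation, and subtract the identity $\widetilde M\begin{bmatrix}Y \\ I\end{bmatrix}Z = \widetilde N\begin{bmatrix}Y \\ I\end{bmatrix}$ of \eqref{eq:is3} (valid since $\xbar M = \widetilde M - C$ and $\xbar N = \widetilde N$). The bottom block row simplifies to $R(\widetilde G)\Delta Z = 0$, which together with $R(\widetilde G)\one = 0$ suggests the rank-one ansatz $\Delta = \one c^T$; substituting this into the top block row and invoking $\widetilde G\one = \one$ and $u^T\one = 1$ leaves a single linear equation that pins down $c$. The diagonalization $V = SDS^{-1}$ is then used to rewrite the resulting correction in the compact form $\widehat Y = Y - D\one u^T Y$ stated in the proposition. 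The hardest step I anticipate is precisely this last reformulation, since the direct derivation naturally produces a correction involving $Z$ (similar but not equal to $D$ via $Z = (I-\widetilde GV)SD((I-\widetilde GV)S)^{-1}$), and matching the stated closed form requires careful bookkeeping with this similarity transformation.
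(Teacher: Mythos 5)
Your overall plan is sound, and for the first two claims it is in fact more elementary than the paper's argument: the paper deduces $\xbar{\phi}(\lambda)=\widetilde\phi(\lambda)(I-\lambda^{-1}C)^{-1}$ from the fact that $\left[\begin{smallmatrix}\one\\0\end{smallmatrix}\right]$ is a common eigenvector of $\widetilde M$ and $\widetilde N$ and then cites an external shift lemma, and it appeals to Brauer's theorem for the eigenvalue claim, whereas your computation $\widetilde\phi(\lambda)C=(1-\lambda)C$ together with $C^2=C$ and a one-line determinant identity gives both at once, self-contained.

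Where your plan goes wrong is the closing remark. Your substitution $\widehat Y=Y-\Delta$ with $\Delta=\one c^T$ is correct and, after using $R(\widetilde G)\one=0$, $\widetilde G\one=\one$, $u^T\one=1$, the top block row forces $c^T=u^TYZ$, so
\[
\widehat Y \;=\; Y-\one\,u^TYZ \;=\; Y-\one\,u^TV\,Y \;=\;(I-\one u^TV)Y,
\]
the second equality because $YZ=V^2(I-\widetilde GV)^{-1}=VY$. You then expect ``careful bookkeeping'' with $Z=(I-\widetilde GV)SD\bigl((I-\widetilde GV)S\bigr)^{-1}$ to turn this into the printed form $\widehat Y=Y-D\one u^TY$. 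That cannot be done, because the two expressions are genuinely different: with $U=(I-\widetilde GV)S$, $\one u^TYZ=\one(u^TYU)DU^{-1}$ is rank one with all rows proportional to $u^TYZ$, while $D\one u^TY=(D\one)(u^TY)$ is rank one with rows proportional to $u^TY$ scaled by the distinct $\mu_i$; they coincide only when all $\mu_i$ are equal. A concrete check: with $n=2$, $V=D=\diag(1/2,1/3)$, $\widetilde G=\one e_1^T$ (so $S=0$ in the splitting of $\widetilde G$), one gets $\one u^TYZ=\bigl[\begin{smallmatrix}1/2&0\\1/2&0\end{smallmatrix}\bigr]$ but $D\one u^TY=\bigl[\begin{smallmatrix}1/2&0\\1/3&0\end{smallmatrix}\bigr]$, and only the former satisfies the first block row of the target equation. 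Retracing the paper's own proof confirms this: assembling the columns $\bigl(Y-\mu_i\one u^TY\bigr)u_i$ yields the top block $YU-\one(u^TYU)D$, not $(Y-D\one u^TY)U$, so after right-multiplying by $U^{-1}$ one lands again on $\widehat Y=Y-\one u^TYZ$. In short, your direct computation gives the correct $\widehat Y$; the compact form stated in the proposition contains a slip, and, as a bonus of your route, the diagonalizability hypothesis on $V$ is not needed since $\widehat Y=(I-\one u^TV)Y$ makes sense without it.
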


\begin{proof}
The equation relating $\xbar{\phi}(\lambda)$ and $\widetilde\phi(\lambda)$  follows from the property
\[
\widetilde M \begin{bmatrix}
    \one \\ 0 \end{bmatrix}=\begin{bmatrix}
    \one \\ 0 \end{bmatrix},~~\widetilde N \begin{bmatrix}
    \one \\ 0 \end{bmatrix}=\begin{bmatrix}
    \one \\ 0 \end{bmatrix}
\]
and from \cite[Theorem 3.32]{blm:book}. Concerning the second part, 
from \eqref{eq:is3} we deduce that
\[
\widetilde \phi(\lambda)
\begin{bmatrix}
    Y \\ I
\end{bmatrix}=\widetilde M \begin{bmatrix}
    Y \\ I
\end{bmatrix} (I - \lambda Z).
\]
For the sake of simplicity assume that the eigenvalues $\mu_i$ of $V$ are nonzero. Since 
$\widetilde \phi(\lambda)=\xbar{\phi}(\lambda)(I-\lambda^{-1}C)$, we obtain
\[
\xbar{\phi}(\lambda)
\begin{bmatrix}
    Y -\lambda^{-1}\one u^T Y\\ I
\end{bmatrix}=\widetilde M \begin{bmatrix}
    Y \\ I
\end{bmatrix} (I - \lambda Z).
\]
By multiplying to the right by 
$u_i=(I-\widetilde G V)S e_i$, where $e_i$ is the $i$-th column of the identity matrix, and by choosing $\lambda=\mu_i^{-1}$, since $(I - \mu_i^{-1} Z)u_i=0$, we obtain that
\[
\widetilde \phi(\mu_i^{-1}) \begin{bmatrix}
    (Y -\mu_i \one u^T Y)u_i \\ u_i
\end{bmatrix}=0, ~~i=1,\ldots,n.
\]
These latter equations imply that the columns of the matrix
\[
\begin{bmatrix}
    (Y-D\one u^TY)(I-\widetilde G V)S \\
    (I-\widetilde G V)S
\end{bmatrix}
\]
span the invariant subspace of $\xbar{\phi}(\lambda)$ corresponding to the eigenvalues of modulus greater than 1.
The proof is completed by setting $\widehat Y=Y-D \one u^T Y$.
Since $\widehat Y$ does not involve the reciprocals of $\mu_i$, the assumption $\mu_i\ne 0$ can be relaxed.
\end{proof}

Under the assumptions of Proposition \ref{prop}, 
if the SDA process has no break-down, then the hypotheses of Theorem \ref{th:conv} are satisfied, and  equations \eqref{eq:conv} turn into
\begin{equation}\label{eq:conv2}
    \begin{array}{ll}
         E_k=(I-Q_kH)G^{2^k},&H-P_k=F_kH (S+H)^{2^k},  \\
         F_k=(I-P_kT)W^{2^k},&T-Q_k=E_kYW^{2^k}.
    \end{array}
\end{equation}
Moreover, $\limsup_{k}\sqrt[2^k]{\Vert H-P_k \Vert}\le \tau\tilde\sigma<\tau\sigma $, where $\tilde\sigma$ is the second largest modulus eigenvalue of $G$.

\section{An application to quasi-Toeplitz matrices}\label{sec:qt}
In this section, we show that the convergence properties of the  algorithms presented in the previous sections, still hold in the case where the matrices $A_{-1},A_0,A_1$ and $\widetilde G$ belong to an (infinite dimensional) Banach
algebra $\mathcal B$, i.e., an algebra endowed with an operator norm that
makes it a Banach space.

\subsection{SDA in a Banach algebra}
The convergence results recalled in Section \ref{sec:prel} have been proved in \cite[Chapter 5]{bim:book} by relying on the properties of matrix algebras endowed with any operator norm $\|\cdot\|$ and on the following two properties, valid for any $n\times n$ matrix $A$:
\begin{enumerate}
    \item $\lim_k\|A^k\|^{1/k}=\rho(A)$;
    \item if $\rho(A)<1$, then there exist $\beta>0$, $\rho(A)<\sigma<1$, such that $\|A^k\|\le\beta\sigma^k$ for any $k\ge 0$.
\end{enumerate}
Therefore, in order to extend the validity of convergence results of SDA to the case of matrices belonging to a Banach algebra $\mathcal B$, we have to show that the above two properties still hold if $A$ belongs to 
$\mathcal B$. 

 We recall that, for any $A\in\mathcal B$, the spectral radius is defined as $\rho(A)=\sup\{\vert\lambda\vert:~\lambda I-A \hbox{ not invertible in }\mathcal B\}$. Actually, the property $\lim_k\|A^k\|^{1/k}=\rho(A)$, also known as Gelfand theorem,  holds true on any Banach algebra $\mathcal B$ independently of the finiteness of the matrix size  \cite{bhatia:book}. Moreover, the second property is a direct consequence of Gelfand theorem. In fact, from the
first property on finds that for any $\epsilon>0$ there exists $k_0>0$
such that  $|\rho(A)-\|A^k\|^{1/k}|\le \epsilon$ for any $k\ge k_0$.
The latter inequality can be rewritten as $(\rho(A)-\epsilon)^k\le \|A^k\|\le (\rho(A)+\epsilon)^k$. 
Choose $\epsilon>0$ small enough so that $\sigma:=\rho(A)+\epsilon<1$, this way, the
condition $\|A^k\|\le\sigma^k$ is satisfied for any $k\ge k_0$.
Then define $\beta=\max\{1,\frac{\max_{1\le k<k_0}\|A^k\|} {(\rho(A)+\epsilon)^{k_0}}\}$ 
and find that, for $1\le k<k_0$, 
\[
\beta\sigma^k\ge \frac{\|A^k\|}{(\rho(A)+\epsilon)^{k_0}}(\rho(A)+\epsilon)^k\ge \|A^k\|.
\]
On the other hand, since $\beta\ge 1$ we have $\|A^k\|\le\sigma^k\le \beta\sigma^k$ for $k\ge k_0$.

Therefore, Theorems~\ref{th:conv} and \ref{th:sda} are still valid in the the case where $E$, $P$, $F$, $Q$, $X$, $Y$, $W$, and $V$  belong to a given Banach algebra $\mathcal B$. 
In particular, the convergence properties stated in Section~\ref{sec:lin} hold in the case where $A_{-1},A_0,A_1\in\mathcal B$, provided that the quadratic matrix equations \eqref{eq:ass} have solutions $G,V\in\mathcal B$ such that $\rho(G),\rho(V)\le 1$ and $\rho(G)\rho(V)<1$. 

An interesting Banach algebra is given by the (extended) quasi-Toeplitz matrices.

\subsection{Quadratic equations in the extended quasi-Toeplitz algebra}
The Extended Quasi-Toeplitz  class $\mathcal{EQT}$ \cite{bmmr:sisc} is a Banach algebra, endowed with the infinity norm, formed by matrices of the kind
\[
A=T(a)+E_a+\one v^T,
\]
where $T(a)=(t_{i,j})$, $t_{i,j}=a_{i-j}\in\mathbb R$ for $i,j\in\mathbb Z^+$, $\sum_{k\in\mathbb Z} |a_k|<\infty$ is a Toeplitz matrix, $E_a=(e_{i,j})$, $e_{i,j}\in\mathbb R$, is such that $\|E_a\|_\infty<+\infty$, and satisfies the decay property $\lim_{i\to\infty}\sum_{j=1}^\infty |e_{i,j}|=0$ so that it is a compact operator, and $v=(v_i)$, $v_i\in\mathbb R$ is a vector in $\ell^1$, that is, $\sum_{i=1}^\infty |v_i|<+\infty$. We refer to $T(a)$, $E_a$ and $\one v^T$ as the Toeplitz part, the compact correction, and the limit part of $A$, respectively. Moreover, we denote by quasi-Toeplitz, the subalgebra $\mathcal{QT}\subset\mathcal{EQT}$ formed by matrices having limit part equal to zero \cite{bmmr:sisc}.  

Quadratic matrix equations with coefficients in $\mathcal{QT}$ are encountered in the analysis of random walks in the quarter plane
performed with the matrix-geometric methodology of \cite{neuts}, that can be applied by looking at any bidimensional random walk as a Quasi-Birth-and-Death stochastic process \cite{lr:book}. The random walk model analyses the dynamic of a particle that can occupy the points $(i,j)$ of a grid in the quarter plane $i,j\ge 0$. The particle in position $(i,j)$ can move to the 9 neighborhood positions $(i+\delta_i,j+\delta_j)$ for $\delta_i,\delta_j\in\{-1,0,1\}$, with given (known) probabilities $a_{\delta_i,\delta_j}$ independent of the position $(i,j)$ if $i,j>0$ (inner part). For $j=0$ the probabilities are given by the $3\times 2$ matrix $(x_{\delta_i,\delta_j})$ for $\delta_i\in\{-1,0,1\}$ and $\delta_j\in\{0,1\}$. Similarly, for $j=0$ the probabilities are given by the $2\times 3$ matrix $(y_{\delta_i,\delta_j})$ for $\delta_i\in\{0,1\}$ and $\delta_j\in\{-1,0,1\}$, and for $i=j=0$ the probabilities are given by the $2\times 2$ matrix $o_{\delta_i,\delta_j}$, for $\delta_i,\delta_j\in\{0,1\}$. Figure~\ref{fig:qp} pictorially summarizes the dynamic of this system.
 For more details we refer the reader to \cite{bmmr}, \cite{bmmr:sisc}, \cite{ozawa}. 
 
 \begin{figure}
     \centering
     \includegraphics[scale=0.4]{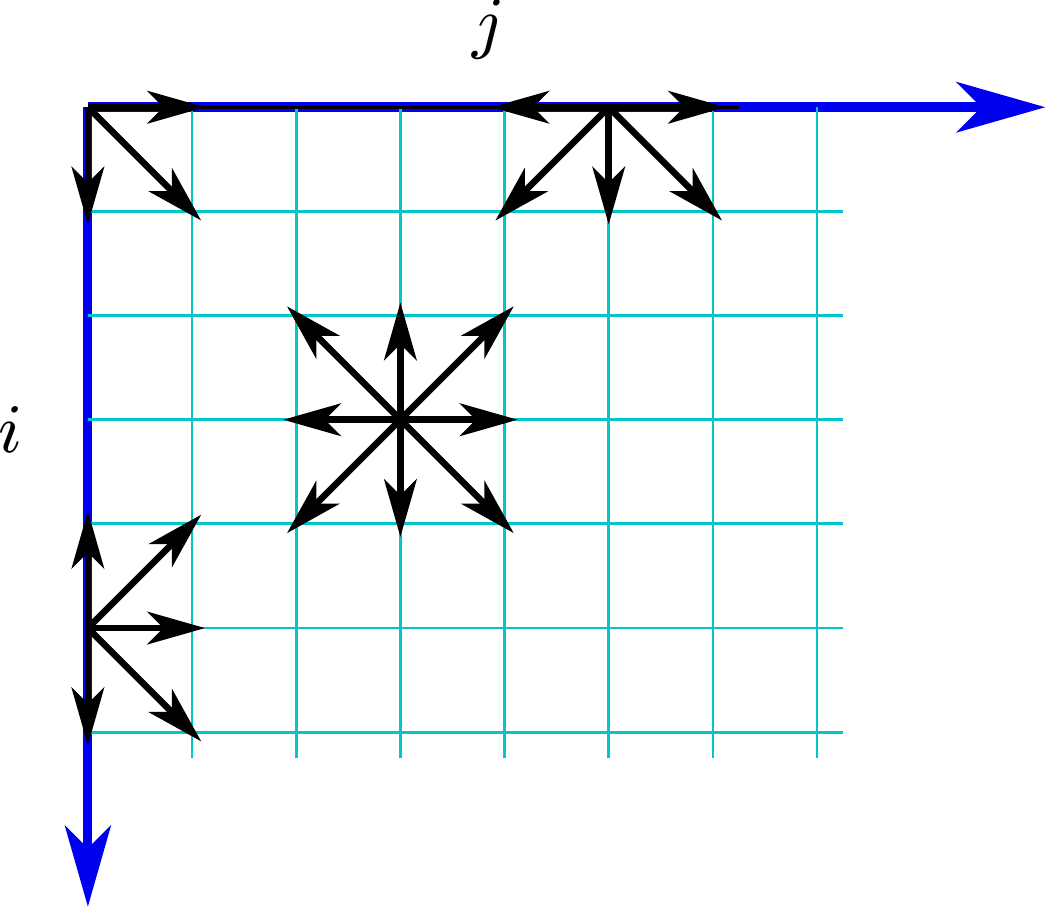}
     \caption{Dynamics of a random walk in the quarter plane}
     \label{fig:qp}
 \end{figure}
 
 The analysis of this model leads to the quadratic matrix equation \eqref{eq:mateq} where the coefficients $A_i$, are such that $A_i=I-B_i$, $i=-1,0,1$, and $B_i$ are tridiagonal matrices with the Toeplitz structure with entries $[a_{i,-1},a_{i,0},a_{i,1}]$ except for the entries in the first row which are determined by the compact correction, that is, $B_i=T(a^{(i)})+E_i$, where $a^{(i)}=[a_{i,-1},a_{i,0},a_{i,1}]$, and $E_i$ has null entries except the first row, that coincides with $[x_{i,0}-a_{i,0},x_{i,1}-a_{i,1},0,\ldots]$. 
 
 Under very mild conditions, there exists the {\em minimal nonnegative solution} $G$ of the matrix equation \eqref{eq:mateq}, where minimal is with respect to the component-wise ordering. If $\sum_{j=-1}^1 a_{-1,j}>\sum_{j=-1}^1 a_{1,j}$, then this solution belongs to $\mathcal{QT}$, that is, its limit part is zero. Whereas, if $\sum_{j=-1}^1 a_{-1,j}<\sum_{j=-1}^1 a_{1,j}$, and  $G$ is stochastic, then $G$ belongs to $\mathcal{EQT}\setminus\mathcal{QT}$, that is, its limit part is nonzero \cite{bmmr:sisc}.
  
 This latter case is difficult to handle. In fact, by applying the standard SDA iteration  with coefficients $A_i\in \mathcal{QT}$, since $\mathcal{QT}$ is an algebra, all the matrices $E_k,F_k,P_k$ and $Q_k$, generated by the algorithm belong to $\mathcal{QT}$, whereas the solution $G$ of the matrix equation is such that $G\in\mathcal{QT}\setminus\mathcal{EQT}$. Whence the sequence $\{P_k\}$ cannot converge in $\mathcal{QT}$. In practice, the compact correction part of $P_k$ should approximate also the limit part, that has columns proportional to the infinite vector $\one$ of components equal to 1.
 From the theoretical point of view this fact implies that
 there exists a constant $\gamma>0$ such that $\|P_k-G\|_\infty\ge \gamma$ independently of $k$.
 Computationally, this produces a very strong slow-down and a memory overflow after a few steps. The same drawback holds also for cyclic reduction and logarithmic reduction, while for fixed point iterations, including Newton's iteration, this can be overcome by choosing as starting approximation a suitable matrix in $\mathcal{EQT}\setminus\mathcal{QT}$.
 
 On the other hand, applying the modified version of the SDA of Section~\ref{sec:sdaref} and by choosing, say, 
 \begin{equation}\label{eq:T1}
 \widetilde G=\frac12 I+\frac12 \one v^T\in \mathcal{EQT},
  \end{equation}
  where $v$ is any vector such that 
$v^T\one=1$, e.g., $v^T=[1,0,0,\ldots]$,
 has the following advantages:
 \begin{enumerate}
     \item  the initial matrices $E_0,F_0,P_0,Q_0$ defined in \eqref{eq:sdashifted} belong to $\mathcal{EQT}\setminus\mathcal{QT}$, so that the computation is maintained in $\mathcal{EQT}\setminus\mathcal{QT}$ where the solution lives;
     \item the compact correction and the limit correction of $P_k$ converge to the compact correction and to the limit correction  of $H=G-\widetilde G$, respectively; this follows from the fact that $\lim_k\|P_k-H\|_\infty=0$;
     \item the convergence is generally faster than the standard SDA. 
     % since $\widetilde G$ is stochastic.
 \end{enumerate}
 In particular, due to the decay property of the compact correction, and to the fact that the vector $v$ in the limit correction is in $\ell^1$, property 2 implies that, for any given $\epsilon>0$, the set of entries in the compact correction of $P_k$ and in the vector $v^T$ of the limit correction  having modulus greater than or equal to $\epsilon$ is finite. This fact overcomes the problem of memory overflow and slowdown.

A further advantage can be obtained by choosing $\widetilde G$ as follows
\begin{equation}\label{eq:gtg}
\widetilde G=T(g)+(\one-T(g)\one)e_1^T,
\end{equation}
where $T(g)$ is the Toeplitz part of the sought solution $G$ and $e_1=[1,0,0,\ldots]^T$.  In this way, the matrix $\widetilde G$ is stochastic,  and its Toeplitz part coincides with the Toeplitz part of $G$.
With this choice, the Toeplitz part of the matrices $P_k$ is constantly zero, this simplifies part of the iteration.
Moreover, as shown in \cite{bmmr:sisc}, the computation of $T(g)$ is very cheap, it can be computed once for all,  and does not affect the overall cost of the procedure.

The next section shows the actual advantages of this approach.

\section{Numerical experiments}\label{sec:exp}
We have tested our algorithm on matrix equations coming from the modeling of random walks in the quarter plane, as described in Section~\ref{sec:qt}. The coefficients of the quadratic matrix equation \eqref{eq:mateq} are $A_{-1}=-B_{-1}$, $A_0=I-B_0$, $A_1=-B_1$, where $B_{-1},B_0,B_1$ are $\mathcal{QT}$ matrices, having the leading $2\times 3$ submatrix defined as follows:

\begin{description}
\item[Test 1]
{\small\[
B_{-1}=\frac19\begin{bmatrix}
3&3&0\\ 2&0&1
\end{bmatrix},\quad 
B_{0}=\frac19\begin{bmatrix}
1&1&0\\ 1&0&1
\end{bmatrix}, \quad 
B_{1}=\frac19\begin{bmatrix}
0&1&0\\ 2&1&1
\end{bmatrix}.
\]}
\item[Test 2]
{\small\[
B_{-1}=\frac1{16}\begin{bmatrix}
5&5&0\\ 2&0&1
\end{bmatrix},\quad 
B_{0}=\frac1{16}\begin{bmatrix}
2&2&0\\ 7&0&2
\end{bmatrix}, \quad 
B_{1}=\frac1{16}\begin{bmatrix}
1&1&0\\ 2&1&1
\end{bmatrix}.
\]}
\item[Test 3]
{\small\[
B_{-1}=\alpha\begin{bmatrix}
484&121&0\\ 80&120&160
\end{bmatrix},~
B_{0}=\alpha\begin{bmatrix}
121&0&0\\ 84&80&80
\end{bmatrix}, ~
B_{1}=\alpha\begin{bmatrix}
121&121&0\\ 160&124&80
\end{bmatrix},
\]}where $\alpha=\frac1{968}$.
\end{description}
Tests 1 and 2 are two examples reported in \cite{bmmr:sisc}, while the third test has been designed so that the solution $G$ of the matrix equation \eqref{eq:mateq} has the Toeplitz part with a very large numerical bandwidth and the correction part with a very large number of entries  greater than the machine precision (see Table~\ref{tab:g} and Figure~\ref{fig:g}). This way, computing the solution $G$ for the matrix equation of Test 3 is expected to be more difficult than computing the solution of Tests 1 and 2. Moreover, for all these tests, the solution $G$ has a nonzero limit part. 
For the sake of completeness, in Figure \ref{fig:g} we display in log scale the graphics of a portion of the computed solutions of Tests 1,2 and 3,  namely the  leading principal submatrix of size $1000\times 1000$. We may see the different decay to zero of the entries in the three cases.

\begin{figure}
    \centering
    \includegraphics[scale=0.4]{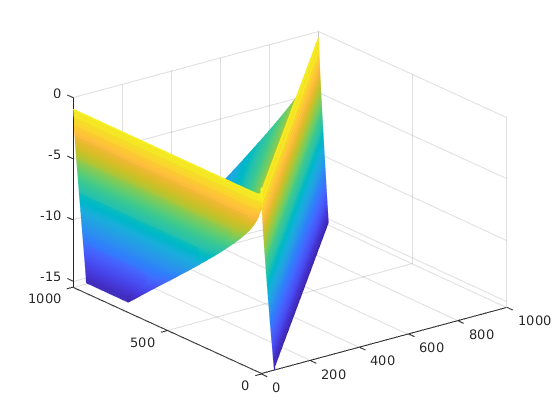}~ \includegraphics[scale=0.4]{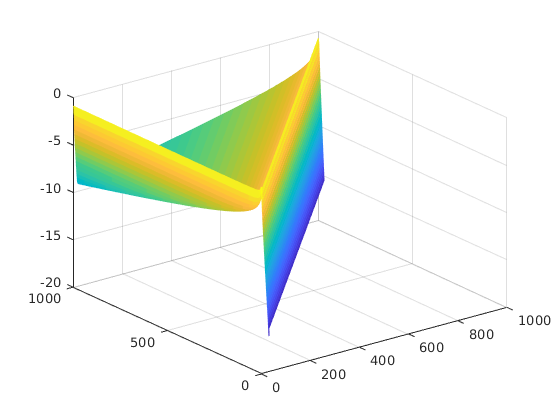}\\
    \includegraphics[scale=0.4]{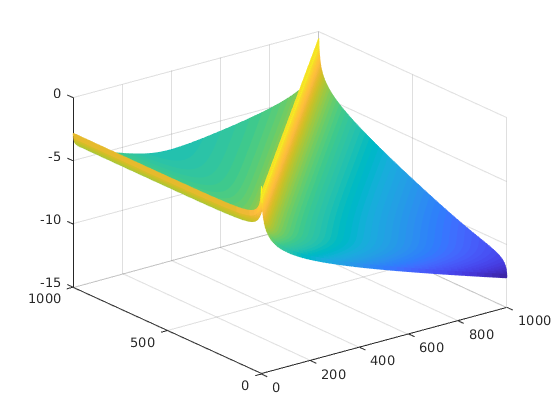}~
    \caption{\footnotesize Log-scale plot of the solution $G$ of Tests 1, 2 (first row) and 3 (second row). Only the portion $G(1:1000,1:1000)$ is displayed. The three matrices have a very different decay rate.}
    \label{fig:g}
\end{figure}

 One may expect that an effective way to solve equation \eqref{eq:mateq} in the case of matrix coefficients with infinite size is truncating these coefficients to a sufficiently large finite size $n$ and solving the finite size equation obtained this way. But in general, the solution of the finite equation is not a good approximation to the solution $G$ of the original equation. In fact, as shown in \cite{latouche02}, the solution of the finite equation strongly depends on the way the loss of stochasticity consequent to the finite truncation  is fixed. For this reason, it is crucial to solve the original equation in the infinite environment where the solution lives.

The tests have been run on a machine with an
Intel(R) Xeon(R) W-2145 CPU @ 3.70GHz with 8 cores, with the Linux operating system Linux Debian 5.10.0-0.deb10.16-amd64, using
Matlab version 9.10.0.1710957 (R2021a).

The computation relies on the toolbox {\tt CQT-Toolbox} of \cite{bmr:cqt} that implements the standard matrix operations in the algebra $\mathcal{EQT}$, the precision of computation has been set to $10^{-15}$.
The code is available from the authors upon request.

Recall that the modified SDA, introduced in Section~\ref{sec:ca},  with the choice $\widetilde G=0$ coincides with the classical SDA iteration described in Section~\ref{sec:prel}. In the case of the Tests 1, 2, and 3, the classical SDA iteration does not provide a convergent sequence since the solution $G$ has a nonzero limit part, i.e., $G\in \mathcal{EQT}\setminus\mathcal{QT}$, while all the approximations generated by classical SDA have a zero limit part since they belong to $\mathcal{QT}$.
In fact, to overcome this problem, in \cite{bmmr:sisc} a suitable fixed point iteration is proposed.

We have implemented 
and tested the new SDA variants with two different choices of $\widetilde G$, namely, 
$\widetilde G=\widetilde G_1:=\frac12(I+\one e_1^T)$ given in \eqref{eq:T1}, and $\widetilde G=\widetilde G_2:=T(g)+(\one -T(g)\one )e_1^T$ of \eqref{eq:gtg}. 
In both cases, $\widetilde G$ is a stochastic matrix belonging to $\mathcal{EQT}\setminus\mathcal{QT}$.
From the numerical experiments, the two variants proposed in Section~\ref{sec:dcsda} and \ref{sec:fi}, respectively, have comparable performances, therefore we report only the results concerning the SDA variant described in Section~\ref{sec:dcsda}. 
The two SDA iterations obtained with $\widetilde G=\widetilde G_1$ and $\widetilde G=\widetilde G_2$  are denoted by 
$SDA_1$, $SDA_2$, respectively.

We have also implemented the fixed point iteration 
$X_{k+1}=F(X_k)$ suggested in  \cite{bmmr:sisc}, defined by
\[
F(X)=-A_0^{-1}(A_{-1}+A_1X^2),
\]
 with $X_0\in\{\widetilde G_1,\widetilde G_2\}$. We denote the two iterations obtained this way by  $FPI_1$, $FPI_2$, respectively. As for standard SDA, the fixed point iteration started with $X_0=0$ does not converge for Tests 1, 2, and 3 since, in this case, the matrices $X_k$ have a null limit part.

In Table \ref{tab:g}, for each test, we report the upper and the lower numerical bandwidth of the Toeplitz part $T(g)$ of the solution $G$, the numerical size and the rank of the compact correction and the length of the vector $v$ in the limit part. Here, for numerical bandwidth, numerical size and numerical length we mean the values obtained after truncating the corresponding entries to 0 if their value in modulus is less than the machine precision.  

\begin{table}[ht]\footnotesize
    \centering
    \begin{tabular}{c|cc|ccc|c}
    Test     & lb & ub & rc & cc & rk & lim \\ \hline
     1        & 738  & 53  &  1016  & 54   &  14 &  55\\
     2        & 2414  & 30  &  3729  & 32   &  11 &  31\\
     3        & 4096  & 1636  &  15320  & 2059   &  29 &  2009\\
    \end{tabular}
    \caption{\footnotesize Information about the solution $G$: Lower (lb) and upper (ub) bandwidth of the Toeplitz part, row-size (rc), column size (cc) and rank (rk) of the compact correction, length (lim) of the limit vector $v$.}
    \label{tab:g}
\end{table}

In Table \ref{tab:t1v}  we report the CPU time, the number of iterations and the residual error $r_k=\|A_{-1}+A_0X_k+A_1X_k^2\|_\infty$, where $X_k$ is the last approximation in the generated matrix sequence. The iteration is halted at step $k$ if the residual error $r_k$ satisfies one of the two following conditions: $r_k<10^{-14}$ or $r_k>r_{k-1}$.

%%%%%%%%%%%%%%%%%%%%%%%%%%% VOLTERR
\begin{table} %[]  %%% TEST 8-9
\footnotesize
    \centering
    \begin{tabular}{c|cccc}
    Test & $SDA_1$ & $SDA_2$ & $FPI_1$ & $FPI_2$\\ \hline
    1 &  1.0 (7) & 0.9 (6) &  3.6 (176) & 2.8 (108)  \\ 
      & 6.1e-13  & 7.4e-14 &  6.5e-14  & 2.4e-14\\\hline
    2 & 4.2 (7)  & 2.6 (5)  & 63.1 (185)  & 7.7 (70) \\
      & 4.9e-13  & 8.9e-14  & 2.5e-14  & 2.5e-14  \\\hline
    3 & 24.3 (11)   & 20.9 (11)  & 1140.3 (3292)  & 1053.7 (2426) \\
      & 1.8e-11   &  6.5e-12  &  1.1e-12    & 2.1e-12
\end{tabular}
    \caption{%Volterra
    \footnotesize CPU time, number of iterations, and residual error for SDA and Fixed Point algoritms for Tests 1,2,3. The subscript 1,2 denotes the initial approximation $\widetilde G=\frac12(I+\one e_1^T)$, and $\widetilde G=T(g)+(\one -T(g)\one )e_1^T$, respectively. 
    }
     \label{tab:t1v}
\end{table}   
    
It is important to point out that standard SDA, 
as well as the sequence generated by the functional iteration with $X_0$  in $\mathcal{QT}\setminus\mathcal{EQT}$, fail to converge. If $X_0$ is chosen with nonzero limit part, then fixed point iterations converge with linear speed, while our modification of SDA maintains a quadratic convergence speed.

From the timings and the number of iterations reported in Table \ref{tab:t1v}, it turns out that for Tests 1, 2, and 3, the reduction of the number of iterations of $SDA_2$, with respect to $FPI_2$, is by a factor of 18, 14, and
220.5,  respectively. While the speed-up in the CPU time is by a factor of about 3 for Tests 1 and 2, and about 50 for Test 3.
We may also observe a slightly better behavior of $SDA_2$ with respect to $SDA_1$.

The residual errors of the approximation to $G$ computed by the different algorithms are comparable. We may observe that, in the case of Test 3, the residual error is greater than the corresponding errors  for Test 1 and 2, by two orders of magnitude. This is independent of the algorithm used for the computation, and is due to the fact that the Test 3 is closer than Tests 1 and 2 to null recurrence.

\section{Conclusions}\label{sec:con}
The technique of defect correction has been applied to solving a quadratic matrix equation of the kind $A_{-1}+A_0X+A_1X^2=0$. The defect equation has been expressed in terms of invariant subspaces and the SDA has been applied for its solution. This approach has led to a quadratically convergent algorithm that allows the choice of the initial approximation. The case where $A_{-1}+A_0+A_1$ is stochastic has been further analysed, and an application to infinite quasi-Toeplitz matrices, encountered in the analysis of random walks in the quarter plane, has been considered. Numerical experiments, performed in the case of infinite quasi-Toeplitz matrices, have shown that the modified SDA proposed in this paper overcomes fixed point iteration both in speed and in CPU time by a quite large factor.

%\bibliographystyle{abbrv}
%\bibliography{biblio}

\end{document}